\theoremstyle{plain}
\newtheorem{theorem}{Theorem}
\newtheorem{lemma}[theorem]{Lemma}
\numberwithin{theorem}{section}
\numberwithin{equation}{section}
\newcommand{\R}{{\mathbb R}}
\newcommand{\Z}{{\mathbb Z}}
\title[The sphere packing problem in dimension $24$]{The sphere packing problem\\ in dimension $24$}
\author[Cohn]{Henry Cohn}
\address{Microsoft Research New England\\
Cambridge, MA} \email{cohn@microsoft.com}
\author[Kumar]{Abhinav Kumar}
\address{Stony Brook University\\
Stony Brook, NY}
\email{thenav@gmail.com}
\author[Miller]{Stephen D.\ Miller}
\address{Rutgers University\\
Piscataway, NJ} \email{miller@math.rutgers.edu}
\author[Radchenko]{Danylo Radchenko}
\address{Max Planck Institute for Mathematics\\
Bonn, Germany}
\curraddr{The Abdus Salam International Centre\hfill\break\indent for Theoretical Physics,
Trieste, Italy}
\email{danradchenko@gmail.com}
\author[Viazovska]{Maryna Viazovska}
\address{Berlin Mathematical School \textup{and}
Humboldt University of Berlin,\hfill\break\indent
Berlin, Germany}
\curraddr{\'Ecole Polytechnique F\'ed\'erale de Lausanne,\hfill\break\indent Lausanne, Switzerland}
\email{viazovska@gmail.com}
\thanks{Miller's research was supported by National Science Foundation grants
DMS-1500562 and CNS-1526333.}
\begin{document}

\begin{abstract}
Building on Viazovska's recent solution of the sphere packing problem in
eight dimensions, we prove that the Leech lattice is the densest packing of
congruent spheres in twenty-four dimensions and that it is the unique
optimal periodic packing.  In particular, we find an optimal auxiliary
function for the linear programming bounds, which is an analogue of
Viazovska's function for the eight-dimensional case.
\end{abstract}

\maketitle

\section{Introduction}

The sphere packing problem asks how to arrange congruent balls as
\mbox{densely} as possible without overlap between their interiors.  The
\emph{density} is the fraction of space covered by the balls, and the problem
is to find the maximal possible density.  This problem plays an important
role in geometry, number theory, and information theory. See \cite{SPLAG} for
background and references on sphere packing and its applications.

Although many interesting constructions are known, provable optimality is
very rare.  Aside from the trivial case of one dimension, the optimal density
was previously known only in two \cite{T}, three \cite{H}, \cite{FPK}, and
eight \cite{V} dimensions, with the latter result being a recent breakthrough
due to Viazovska; see \cite{C}, \cite{dLV} for expositions. Building on her
work, we solve the sphere packing problem in twenty-four dimensions:

\begin{theorem}\label{thm:main}
The Leech lattice achieves the optimal sphere packing density in $\R^{24}$,
and it is the only periodic packing in $\R^{24}$ with that density, up to
scaling and isometries.
\end{theorem}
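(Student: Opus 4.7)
The strategy is to apply the Cohn--Elkies linear programming bound: one seeks a radial Schwartz function $f\colon\R^{24}\to\R$ with $f(0)=\hat f(0)>0$, $\hat f(\xi)\ge 0$ for all $\xi$, and $f(x)\le 0$ for $|x|\ge r$, from which the density is bounded by $\vol(B_{r/2})$. To match the Leech lattice $\Lambda_{24}$, we normalize so that the minimal nonzero vector length is $2$, take $r=2$, and demand that equality hold throughout. Tightness forces $f$ to vanish at every nonzero Leech length, i.e.\ at $|x|^2\in\{4,6,8,\dots\}$, with at least a double zero at $|x|^2=4$; since $\Lambda_{24}$ is self-dual, Poisson summation forces the same vanishing pattern on $\hat f$.

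Following Viazovska's approach, I would split $f=f_++f_-$ into $\pm 1$-eigenfunctions of the radial Fourier transform and construct each piece as a Laplace-type integral
\[
 f_\pm(x)\;=\;\int_0^{i\infty}\psi_\pm(\tau)\,e^{\pi i\,|x|^2\tau}\,d\tau,
\]
where $\psi_\pm$ is a weakly holomorphic modular form for a congruence subgroup of level $2$, of weight adjusted to dimension $24$ so that the involution $\tau\mapsto -1/\tau$ implements the $24$-dimensional Fourier transform (the relevant weight is dictated by $24/2=12$, matching the fact that the theta series of $\Lambda_{24}$ is modular of weight $12$). The vanishing of $f_\pm$ at the Leech lengths, and the prescribed double zero at $|x|^2=4$, translate into explicit conditions on the $q$-expansions of $\psi_\pm$ at the cusps; these pin down $\psi_\pm$ as specific combinations of eta quotients, Eisenstein series, and their derivatives of level $2$. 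The constants $f(0)$ and $\hat f(0)$ are then read off from the principal parts at the relevant cusps and must be tuned to be equal.

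The main obstacle, exactly as in dimension $8$, is the verification of the two sign conditions: $f(x)\le 0$ for $|x|\ge 2$ and $\hat f(x)\ge 0$ everywhere. After rewriting $f_\pm$ along suitable contours using modular transformations, these inequalities reduce to pointwise positivity (or negativity) of concrete modular expressions on the imaginary axis. Controlling these signs, together with the growth of $\psi_\pm$ at the cusps, is where essentially all the work lies; the $24$-dimensional case is expected to differ from the $8$-dimensional one only in the detailed choice of modular building blocks and the arithmetic of the ensuing inequalities, not in overall structure.

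Once the magic function is in hand, optimality of $\Lambda_{24}$ is immediate from the linear programming bound. For uniqueness among periodic packings, I would argue via the equality case: if $\Lambda+\{s_1,\dots,s_N\}$ attains the bound, then every vector of the form $\lambda+s_i-s_j$ (with $\lambda\in\Lambda$, nonzero) must lie in the zero set $\{|x|^2\in\{4,6,8,\dots\}\}$ of $f$, and the Poisson-summation equality pins down the dual structure as well. Combined with the classification of even unimodular lattices of rank $24$ (Niemeier) and the rigidity arguments developed by Cohn and Kumar, this will force the packing to be a scaled isometric copy of the Leech lattice.
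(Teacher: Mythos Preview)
Your outline captures the overall strategy correctly, and it is the same approach as the paper's, but several of the specifics you give would not work as stated.

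First, the integral representation $\int_0^{i\infty}\psi_\pm(\tau)e^{\pi i |x|^2\tau}\,d\tau$ is not quite right: the actual eigenfunctions carry a prefactor $-4\sin^2(\pi r^2/2)$, and it is this factor (not cusp conditions on $\psi_\pm$) that manufactures the double zeros at $r=\sqrt{2k}$. More importantly, the two kernels are \emph{not} both weakly holomorphic modular forms of level~$2$. The $-1$ eigenfunction does use a weight~$-10$ form for $\Gamma(2)$, but the $+1$ eigenfunction requires a \emph{quasimodular} form of weight~$-8$ and depth~$2$ for the full modular group $\mathrm{SL}_2(\Z)$ (built from $E_2,E_4,E_6$ over $\Delta^2$). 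The depth-$2$ quasimodular transformation law is precisely what makes the identity
\[
(z+1)^2\varphi(z{+}1)-2z^2\varphi(z)+(z{-}1)^2\varphi(z{-}1)=2\varphi(z)
\]
hold after applying $z\mapsto -1/z$, and this is the mechanism that produces the $+1$ eigenvalue; a genuine modular form would not do this. Second, you underestimate the sign analysis: in dimension $24$ there is a complication absent in dimension~$8$, namely that the integral formula for $\widehat f(r)$ only converges for $r>\sqrt{2}$, so proving $\widehat f\ge 0$ on $(0,\sqrt{2})$ requires a separate argument isolating the leading asymptotics of the kernel. Finally, for uniqueness your invocation of the Niemeier classification and Cohn--Kumar is both unnecessary and misdirected (those results concern lattices, whereas the claim is about periodic packings). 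The correct route is the one you begin to sketch: once you know $f$ has \emph{no} roots on $[2,\infty)$ other than $\sqrt{2k}$, the equality case of Poisson summation forces the difference set of any optimal periodic packing into those radii, and the argument of Cohn--Elkies, Section~8, then directly identifies the packing with the Leech lattice without any external classification.
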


In particular, the optimal sphere packing density in $\R^{24}$ is that of
the Leech lattice, namely
\[
\frac{\pi^{12}}{12!} = 0.0019295743\dots\,.
\]
For an appealing construction of the Leech lattice, see Section~2.8 of
\cite{E}.

It is unknown in general whether optimal packings have any special structure,
but our theorem shows that they do in $\R^{24}$. The optimality and
uniqueness of the Leech lattice were previously known only among lattice
packings \cite{CK}, which is a far more restrictive setting. Recall that a
lattice is a discrete subgroup of $\R^n$ of rank $n$, and a lattice packing
uses spheres centered at the points of a lattice, while a periodic packing is
the union of finitely many translates of a lattice. Lattices are far more
algebraically constrained, and it is widely believed that they do not achieve
the optimal density in most dimen\-sions. (For example, see
\cite[p.~140]{SPLAG} for an example in $\R^{10}$ of a periodic packing that
is denser than any known lattice.) By contrast, periodic packings at least
come arbitrarily close to the optimal sphere packing density.

The proof of \fullref{Theorem}{thm:main} will be based on the linear
programming bounds for sphere packing, as given by the following theorem.

\begin{theorem}[Cohn and Elkies \cite{CE}] \label{thm:LP}
Let $f \colon \R^n \to \R^n$ be a Schwartz function and $r$ a positive real
number such that $f(0) = \widehat{f}(0)=1$, $f(x) \le 0$ for $|x| \ge r$, and
$\widehat{f}(y) \ge 0$ for all $y$.  Then the sphere packing density in
$\R^n$ is at most
\[
\frac{\pi^{n/2}}{(n/2)!}\left(\frac{r}{2}\right)^n.
\]
\end{theorem}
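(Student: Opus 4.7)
The plan is to reduce to periodic packings and then exploit Poisson summation so that the two sign hypotheses on $f$ and $\widehat{f}$ pinch a single quantity between matching bounds. Since periodic packings approach the optimal sphere packing density arbitrarily closely (as noted just above), it is enough to show that for a periodic packing consisting of $N$ translates $v_1,\dots,v_N$ of a lattice $\Lambda \subset \R^n$ with minimum center-to-center distance at least $r$, the density $N\vol(B_{r/2})/\vol(\R^n/\Lambda)$ is bounded by $\frac{\pi^{n/2}}{(n/2)!}(r/2)^n$.

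The key step is to introduce the double sum
\[
S \;=\; \sum_{j,k=1}^{N}\,\sum_{x \in \Lambda} f(x + v_j - v_k).
\]
The arguments $x+v_j-v_k$ enumerate all differences of sphere centers in the packing. Exactly $N$ of them vanish (one per diagonal index with $x=0$), and all others have norm at least $r$, on which $f$ is nonpositive by hypothesis. Therefore $S \le N f(0) = N$.

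The second step is to evaluate $S$ spectrally. Applying Poisson summation to the lattice $\Lambda$ inside the inner sum and exchanging orders of summation yields
\[
S \;=\; \frac{1}{\vol(\R^n/\Lambda)} \sum_{y \in \Lambda^{*}} \widehat{f}(y)\,\Bigl|\sum_{j=1}^{N} e^{2\pi i \langle y, v_j\rangle}\Bigr|^{2},
\]
where $\Lambda^{*}$ is the dual lattice. Since $\widehat{f} \ge 0$ everywhere, every term on the right is nonnegative, and retaining only the contribution at $y=0$ gives $S \ge N^{2}\widehat{f}(0)/\vol(\R^n/\Lambda) = N^{2}/\vol(\R^n/\Lambda)$. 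Combining the two estimates yields $\vol(\R^n/\Lambda) \ge N$, which is exactly the density bound after recalling that $\vol(B_{r/2}) = \frac{\pi^{n/2}}{(n/2)!}(r/2)^n$.

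The only technical points are justifying Poisson summation, which follows immediately from the Schwartz hypothesis on $f$ (guaranteeing absolute convergence of both lattice sums together with uniform convergence over translates), and the standard reduction from general packings to periodic ones. The genuine content is the construction of $S$: the hypothesis $f \le 0$ off the ball of radius $r$ is precisely what lets one bound $S$ from above via the packing's minimum distance, while the hypothesis $\widehat{f} \ge 0$ is what lets one bound $S$ from below by a single Fourier term at the origin. Designing a function that meets both constraints efficiently is the linear programming problem whose dimension-$24$ solution occupies the rest of the paper.
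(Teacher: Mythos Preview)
The paper does not prove \fullref{Theorem}{thm:LP}; it is quoted from Cohn and Elkies \cite{CE} and used as a black box. So there is no ``paper's own proof'' to compare against. Your argument is precisely the standard proof from \cite{CE}: reduce to periodic packings, form the double lattice sum $S$, bound it above using $f\le 0$ outside radius $r$ and below via Poisson summation and $\widehat{f}\ge 0$, and read off $\vol(\R^n/\Lambda)\ge N$.

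One small point of exposition: you write ``with minimum center-to-center distance at least $r$'' and then take the density to be $N\vol(B_{r/2})/\vol(\R^n/\Lambda)$. Strictly, a packing is given with some sphere radius $\rho$, and the centers have minimum distance at least $2\rho$; you should say explicitly that density is scale-invariant, so one may normalize $\rho=r/2$, which forces the minimum distance to be at least $r$ and makes your density formula literally correct. With that sentence added the argument is complete.
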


Here $(n/2)!$ means $\Gamma(n/2+1)$ when $n$ is odd, and the Fourier
transform is normalized by
\[
\widehat{f}(y) = \int_{\R^n} f(x) e^{-2\pi i \langle x,y \rangle} \, dx,
\]
where $\langle \cdot, \cdot \rangle$ denotes the usual inner product on
$\R^n$.  Without loss of generality, we can radially symmetrize $f$, in
which case $\widehat{f}$ is radial as well. We will often tacitly identify
radial functions on $\R^{24}$ with functions on $[0,\infty)$ and vice versa,
by using $f(r)$ with $r \in [0,\infty)$ to denote the common value $f(x)$
with $|x|=r$. All Fourier transforms will be in $\R^{24}$ unless otherwise
specified. In other words, if $f$ is a function of one variable defined on
$[0,\infty)$, then $\widehat{f}(r)$ means
\[
\int_{\R^{24}} f\big(|x|\big) e^{-2\pi i \langle x,y \rangle} \, dx,
\]
where $y \in \R^{24}$ satisfies $|y|=r$.

Optimizing the bound from \fullref{Theorem}{thm:main} requires choosing the
right aux\-iliary function $f$.  It was not previously known how to do so
except in one dimension \cite{CE} or eight \cite{V}, but Cohn and Elkies
conjectured the existence of an auxiliary function proving the optimality of
the Leech lattice \cite{CE}. We prove this conjecture by developing an
analogue for the Leech lattice of Viazovska's construction for the $E_8$ root
lattice.

In the case of the Leech lattice, proving optimality amounts to achieving
$r=2$, which requires that $f$ and $\widehat{f}$ have roots on the spheres of
radius $\sqrt{2k}$ about the origin for $k=2,3,\dots$\,.  See \cite{CE} for
further explanation and discussion of this condition. Furthermore, the
argument in Section~8 of \cite{CE} shows that if $f$ has no other roots at
distance $2$ or more, then the Leech lattice is the unique optimal periodic
packing in $\R^{24}$.  Thus, the proof of \fullref{Theorem}{thm:main} reduces
to constructing such a function.

The existence of an optimal auxiliary function in $\R^{24}$ has long been
anticipated, and Cohn and Miller made further conjectures in \cite{CM} about
special values of the function, which we also prove.  Our approach is based
on a new connection with quasimodular forms discovered by Viazovska \cite{V},
and our proof techniques are analogous to hers.  In \fullref{Sections}{sec:a}
and~\ref{sec:b} we will build two radial Fourier eigenfunctions in $\R^{24}$,
one with eigenvalue $1$ constructed using a weakly holomorphic quasimodular
form of weight $-8$ and depth $2$ for $\mathrm{SL}_2(\Z)$, and one with
eigenvalue $-1$ constructed using a weakly holomorphic modular form of weight
$-10$ for the congruence subgroup $\Gamma(2)$.  We will then take a linear
combination of these eigenfunctions in \fullref{Section}{sec:proof} to
construct the optimal auxiliary function. Throughout the paper, we will make
free use of the standard definitions and notation for modular forms from
\cite{V}, \cite{Z}.


\vspace{-0.1cm}

\section{The $+1$ eigenfunction}
\label{sec:a}

\vspace{-0.05cm}

We begin by constructing a radial eigenfunction of the Fourier transform in
$\R^{24}$ with eigenvalue $1$ in terms of the quasimodular form
\begin{equation} \label{eq:phidef}
\begin{split}
\varphi &= \frac{\big(25E_4^4 - 49E_6^2E_4\big) + 48E_6E_4^2E_2 + \big({-49}E_4^3 + 25E_6^2\big)E_2^2}{\Delta^2}\\
&= -3657830400 q
 - 314573414400 q^2
 - 13716864000000 q^3+ O\big(q^4\big),
\end{split}
\end{equation}
where $q = e^{2\pi i z}$ and the variable $z$ lies in the upper half plane.
As mentioned in the introduction, we follow the notation of \cite{V}. In
particular, $E_k$ denotes the Eisenstein series
\[
E_k(z) = 1 + \frac{2}{\zeta(1-k)}\sum_{n=1}^\infty \sum_{d \,\mid\, n} d^{k-1} e^{2\pi i n z},
\]
which is a modular form of weight $k$ for $\mathrm{SL}_2(\Z)$ when $k$ is
even and greater than $2$ (and a quasimodular form when $k=2$). Furthermore,
we normalize $\Delta$ by
\[
\Delta = \frac{E_4^3-E_6^2}{1728} = q - 24q^2 + 252q^3 + O\big(q^4\big).
\]
Recall that $\Delta$ vanishes nowhere in the upper half plane.

This function $\varphi$ is a weakly holomorphic quasimodular form of weight
$-8$ and depth $2$ for the full modular group.  Specifically, because
\[
z^{-2}E_2\mathopen{}\left(-\frac{1}{z}\right)\mathclose{} = E_2(z) - \frac{6i}{\pi z},
\]
we have the quasimodularity relation
\begin{equation}
\label{eq:quasimodular}
z^8 \varphi\mathopen{}\left(-\frac{1}{z}\right)\mathclose{} = \varphi(z) + \frac{\varphi_1(z)}{z} + \frac{\varphi_2(z)}{z^2},
\end{equation}
where
\begin{align*}
\varphi_1 &= -\frac{6i}{\pi} 48\frac{E_6E_4^2}{\Delta^2} - \frac{12i}{\pi} \frac{E_2 \big({-49E_4^3} + 25E_6^2\big)}{\Delta^2}\\
&= \frac{i}{\pi}\Big(725760 q^{-1}
 + 113218560
 + 19691320320 q+ O\big(q^2\big)\Big)
\end{align*}
and
\begin{align*}
\varphi_2 &= - \frac{36\big({-49E_4^3} + 25E_6^2\big)}{\pi^2\Delta^2}\\
&= \frac{1}{\pi^2}\Big(864 q^{-2}
 + 2218752 q^{-1}
 + 223140096
 + 23368117248 q+ O\big(q^2\big)\Big).
\end{align*}

It follows from setting $z=it$ in \eqref{eq:quasimodular} that
\begin{equation} \label{eq:phiovertinfty}
\varphi(i/t) = O\big(t^{-10} e^{4\pi t}\big)
\end{equation}
as $t \to \infty$, while the $q$-series \eqref{eq:phidef} for $\varphi$ shows
that
\begin{equation} \label{eq:phiovertzero}
\varphi(i/t) = O\big(e^{-2\pi/t}\big)
\end{equation}
as $t \to 0$.  We define
\begin{equation} \label{eq:adef}
a(r) = - 4 \sin\mathopen{}\big(\pi r^2/2\big)^2\mathclose{} \int_0^{i \infty} \varphi\mathopen{}\left(-\frac{1}{z}\right)\mathclose{} z^{10} e^{\pi i r^2 z} \, dz
\end{equation}
for $r>2$, which converges absolutely by these bounds.

\begin{lemma} \label{lemma:plusone}
The function $r \mapsto a(r)$ analytically continues to a holomorphic
function on a neighborhood of $\R$.  Its restriction to $\R$ is a Schwartz
function and a radial eigenfunction of the Fourier transform in $\R^{24}$
with eigenvalue $1$.
\end{lemma}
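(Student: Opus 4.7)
The plan is to verify the three assertions in turn, following the strategy of Viazovska in the $E_{8}$ case. The key tools are the quasimodular transformation law \eqref{eq:quasimodular}, the $24$-dimensional Gaussian Fourier transform formula $\widehat{e^{\pi i r^{2}\tau}}(y) = (-i\tau)^{-12}\,e^{-\pi i |y|^{2}/\tau}$ valid for $\tau$ in the upper half plane, and the double-angle identity $-4\sin^{2}(\pi r^{2}/2) = -2 + e^{\pi i r^{2}} + e^{-\pi i r^{2}}$.

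For the analytic continuation, multiply \eqref{eq:quasimodular} by $z^{2}$ to get the pointwise identity
\[
\varphi(-1/z)\,z^{10} = z^{2}\varphi(z) + z\,\varphi_{1}(z) + \varphi_{2}(z),
\]
and substitute this into \eqref{eq:adef}. Parametrizing $z = it$ and inserting the $q$-expansions of $\varphi$, $\varphi_{1}$, $\varphi_{2}$ (with polar parts of orders $0$, $1$, $2$ at $i\infty$, respectively) reduces the integral to a sum of elementary Laplace integrals
\[
\int_{0}^{\infty} t^{k} e^{-\pi(r^{2}+2n)t}\,dt = \frac{k!}{\pi^{k+1}(r^{2}+2n)^{k+1}},
\]
with $k \in \{0,1,2\}$ and $n \ge -k$. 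Subexponential growth of the Fourier coefficients of the weakly holomorphic (quasi)modular forms involved makes the resulting series converge on compacta avoiding the poles $r^{2} = -2n$, realizing the integral as a meromorphic function of $r$. The only real poles occur at $r^{2} \in \{0,2,4\}$, and reading off the explicit coefficients of $\varphi$, $\varphi_{1}$, $\varphi_{2}$ shows the maximum pole order at each of these is $2$ (no cubic pole arises because $\varphi$ has $q$-expansion beginning at $q^{1}$); this is exactly cancelled by the order-$2$ zero of $-4\sin^{2}(\pi r^{2}/2)$. All remaining poles lie at $r = \pm i\sqrt{2n}$ with $n \ge 1$, so $a$ extends holomorphically to the strip $|\mathrm{Im}(r)| < \sqrt{2}$, a neighborhood of $\R$.

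The Schwartz property on $\R$ follows from \eqref{eq:adef} and the bounds \eqref{eq:phiovertinfty}, \eqref{eq:phiovertzero}. Parametrizing $z = it$ gives $|a(r)| \le 4\int_{0}^{\infty} t^{10}\,|\varphi(i/t)|\,e^{-\pi r^{2} t}\,dt$, which is $O_{T}(e^{-\pi r^{2} T})$ for any $T > 0$ after splitting the integral at $t = T$ and applying each of the two bounds on the respective piece. Differentiating under the integral brings down powers of $r^{2}$ that are still dominated by this exponential decay, giving rapid decay of all derivatives.

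The main obstacle is the eigenfunction identity $\widehat{a} = a$. My tactic is to apply the double-angle identity to \eqref{eq:adef}, yielding
\[
a(r) = \int_{0}^{i\infty}\! \varphi(-1/z)\,z^{10}\bigl(-2\,e^{\pi i r^{2}z} + e^{\pi i r^{2}(z+1)} + e^{\pi i r^{2}(z-1)}\bigr)\,dz,
\]
to exchange the Fourier transform with the integral (justified by the Schwartz estimates), and to apply the Gaussian Fourier transform formula to each of the three exponentials. The resulting integrands involve $e^{-\pi i |y|^{2}/z}$, $e^{-\pi i|y|^{2}/(z+1)}$, and $e^{-\pi i|y|^{2}/(z-1)}$; substituting $w = -1/z$, $w = -1/(z+1)$, and $w = -1/(z-1)$ on the three pieces respectively, and deforming the resulting arcs back to $[0,i\infty]$ by Cauchy's theorem (using holomorphy and decay in the upper half plane), one expresses $\widehat{a}(y)$ as an integral on $[0,i\infty]$ of an integrand built from $\varphi$ evaluated at various Mobius translates. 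The quasimodular relation \eqref{eq:quasimodular} together with $\varphi(z+1) = \varphi(z)$ then rewrites this expression in terms of $\varphi$, $\varphi_{1}$, $\varphi_{2}$. The delicate step, which I expect to be the main technical hurdle, is showing that the depth-$2$ correction pieces $\varphi_{1}/z$ and $\varphi_{2}/z^{2}$ combine across the three substitutions to reproduce precisely the $-4\sin^{2}(\pi|y|^{2}/2)$ prefactor; once this cancellation is verified, radiality of $a$ and $\widehat{a}$ completes the identification.
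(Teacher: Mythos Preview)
Your analytic continuation argument has a genuine gap. The Fourier coefficients of $\varphi$, $\varphi_{1}$, $\varphi_{2}$ are not merely ``subexponential'' in a sense strong enough to make your series converge: because of the factor $1/\Delta^{2}$, these coefficients grow like $e^{c\sqrt{n}}$. That is $o(e^{\epsilon n})$ for every $\epsilon>0$, but it is superpolynomial, so the series $\sum_{n} c_{n}\,(r^{2}+2n)^{-(k+1)}$ diverges for every $r$. Expanding the full $q$-series and integrating term by term therefore does not produce a meromorphic continuation. What does work (and is close in spirit to your idea) is to subtract only the finitely many \emph{polar} terms: writing $t^{10}\varphi(i/t)=p(t)+O(t^{2}e^{-2\pi t})$ as in \eqref{eq:phip}, the Laplace transform of the remainder converges for all real $r$, while $p(t)$ integrates to an explicit rational function of $r$ whose poles are cancelled by $\sin^{2}(\pi r^{2}/2)$. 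The paper's primary route is slightly different: it uses the sine identity to obtain three integrals with base points $-1,0,1$, splits each at $z=i$, and applies quasimodularity only on the tails. The three tails combine to $2\int_{i}^{i\infty}\varphi(z)\,e^{\pi i r^{2}z}\,dz$, which converges for all $r$ since $\varphi(z)=O(q)$; the compact pieces from $\pm 1,0$ to $i$ converge trivially. This yields the continued formula \eqref{eq:conta}, from which the Schwartz bounds are immediate.

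This gap propagates to your eigenfunction argument. You propose to exchange the Fourier transform with the integral in the raw formula \eqref{eq:adef}, but that integral only converges for $r>2$, so it does not define $a$ as a function on $\R^{24}$ that one can Fourier transform term by term. One must first pass to a representation valid for all $r$ (such as \eqref{eq:conta}) and Fourier transform \emph{that}. Once this is done, the substitution $w=-1/z$ together with periodicity of $\varphi$ gives $\widehat{a}=a$ directly from \eqref{eq:conta}; the depth-$2$ cancellation you anticipate as ``the main technical hurdle'' has in fact already been carried out in deriving \eqref{eq:conta}, so no further delicate bookkeeping with $\varphi_{1}$, $\varphi_{2}$ is needed at the Fourier step.
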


\begin{proof}
We follow the approach of \cite{V}, adapted to use modular forms of
different weight. Substituting
\[
-4\sin\mathopen{}\big(\pi r^2/2\big)^2\mathclose{} = e^{\pi i r^2} - 2 + e^{-\pi i r^2}
\]
yields
\begin{align*}
a(r) &= \int_{-1}^{i \infty -1} \varphi\mathopen{}\left(-\frac{1}{z+1}\right)\mathclose{} (z+1)^{10} e^{\pi i r^2 z} \, dz
- 2 \int_0^{i \infty} \varphi\mathopen{}\left(-\frac{1}{z}\right)\mathclose{} z^{10} e^{\pi i r^2 z} \, dz\\
& \quad \phantom{} + \int_1^{i \infty+1} \varphi\mathopen{}\left(-\frac{1}{z-1}\right)\mathclose{} (z-1)^{10} e^{\pi i r^2 z} \, dz\\
&=\int_{-1}^{i} \varphi\mathopen{}\left(-\frac{1}{z+1}\right)\mathclose{} (z+1)^{10} e^{\pi i r^2 z} \, dz\\
&\quad \phantom{} + \int_{i}^{i \infty} \varphi\mathopen{}\left(-\frac{1}{z+1}\right)\mathclose{} (z+1)^{10} e^{\pi i r^2 z} \, dz\\
& \quad \phantom{} - 2 \int_0^{i} \varphi\mathopen{}\left(-\frac{1}{z}\right)\mathclose{} z^{10} e^{\pi i r^2 z} \, dz - 2 \int_i^{i \infty} \varphi\mathopen{}\left(-\frac{1}{z}\right)\mathclose{} z^{10} e^{\pi i r^2 z} \, dz\\
& \quad \phantom{} + \int_1^{i} \varphi\mathopen{}\left(-\frac{1}{z-1}\right)\mathclose{} (z-1)^{10} e^{\pi i r^2 z} \, dz\\
&\quad \phantom{}  + \int_i^{i \infty} \varphi\mathopen{}\left(-\frac{1}{z-1}\right)\mathclose{} (z-1)^{10} e^{\pi i r^2 z} \, dz,
\end{align*}
where we have shifted contours as in the proof of Proposition~2 in \cite{V}.
Now the quasimodularity relation \eqref{eq:quasimodular} and periodicity
modulo $1$ show that
\begin{align*}
&\varphi\mathopen{}\left(-\frac{1}{z+1}\right)\mathclose{} (z+1)^{10} - 2  \varphi\mathopen{}\left(-\frac{1}{z}\right)\mathclose{} z^{10} + \varphi\mathopen{}\left(-\frac{1}{z-1}\right)\mathclose{} (z-1)^{10}
\\
& \qquad = \varphi(z+1) (z+1)^2 - 2 \varphi(z) z^2 + \varphi(z-1) (z-1)^2\\
& \qquad\qquad \phantom{} + \varphi_1(z+1) (z+1) - 2 \varphi_1(z) z + \varphi_1(z-1) (z-1)\\
& \qquad\qquad \phantom{} + \varphi_2(z+1) - 2 \varphi_2(z) + \varphi_2(z-1)\\
& \qquad = 2 \varphi(z).
\end{align*}
Thus,
\begin{equation}
\label{eq:conta}
\begin{split}
a(r) & =
\int_{-1}^{i} \varphi\mathopen{}\left(-\frac{1}{z+1}\right)\mathclose{} (z+1)^{10} e^{\pi i r^2 z} \, dz\\
& \quad \phantom{} + \int_1^{i} \varphi\mathopen{}\left(-\frac{1}{z-1}\right)\mathclose{} (z-1)^{10} e^{\pi i r^2 z} \, dz\\
& \quad \phantom{} - 2 \int_0^{i} \varphi\mathopen{}\left(-\frac{1}{z}\right)\mathclose{} z^{10} e^{\pi i r^2 z} \, dz + 2 \int_i^{i \infty} \varphi(z) e^{\pi i r^2 z} \, dz,
\end{split}
\end{equation}
which gives the analytic continuation of $a$ to a neighborhood of $\R$ by
\eqref{eq:phiovertinfty} and \eqref{eq:phiovertzero}. Essentially the same
estimates as in Proposition~1 of \cite{V} show that it is a Schwartz
function.  Specifically, the exponential decay of $\varphi(z)$ as the
imaginary part of $z$ tends to infinity suffices to bound all the terms in
\eqref{eq:conta}, which shows that $a$ and all its derivatives decay
exponentially.

Taking the $24$-dimensional radial Fourier transform commutes with the
integrals in \eqref{eq:conta} and amounts to replacing $e^{\pi i r^2 z}$ with
$z^{-12} e^{\pi i r^2 (-1/z)}$.  Therefore
\begin{align*}
\widehat{a}(r) & =
\int_{-1}^{i} \varphi\mathopen{}\left(-\frac{1}{z+1}\right)\mathclose{} (z+1)^{10} z^{-12} e^{\pi i r^2 (-1/z)} \, dz\\
& \quad \phantom{} + \int_1^{i} \varphi\mathopen{}\left(-\frac{1}{z-1}\right)\mathclose{} (z-1)^{10} z^{-12} e^{\pi i r^2 (-1/z)} \, dz\\
& \quad \phantom{} - 2 \int_0^{i} \varphi\mathopen{}\left(-\frac{1}{z}\right)\mathclose{} z^{-2} e^{\pi i r^2 (-1/z)} \, dz + 2 \int_i^{i \infty} \varphi(z) z^{-12} e^{\pi i r^2 (-1/z)} \, dz.
\end{align*}
Now setting $w = -1/z$ shows that
\begin{align*}
\widehat{a}(r) & =
\int_{1}^{i} \varphi\mathopen{}\left(-1-\frac{1}{w-1}\right)\mathclose{} \left(-\frac{1}{w}+1\right)^{10} w^{10} e^{\pi i r^2 w} \, dw\\
& \quad \phantom{} + \int_{-1}^{i} \varphi\mathopen{}\left(1-\frac{1}{w+1}\right)\mathclose{} \left(-\frac{1}{w}-1\right)^{10} w^{10} e^{\pi i r^2 w} \, dw\\
& \quad \phantom{} + 2 \int_i^{i\infty} \varphi(w) e^{\pi i r^2 w} \, dw - 2 \int_0^{i} \varphi\mathopen{}\left(-\frac{1}{w}\right)\mathclose{} w^{10} e^{\pi i r^2 w} \, dw.
\end{align*}
Thus, \eqref{eq:conta} and the fact that $\varphi$ is periodic modulo $1$
show that $\widehat{a} = a$, as desired.
\end{proof}

For $r>2$, we have
\begin{equation} \label{eq:a2}
a(r) = 4 i \sin\mathopen{}\big(\pi r^2/2\big)^2\mathclose{} \int_0^\infty \varphi(i/t) t^{10} e^{-\pi r^2 t} \, dt
\end{equation}
by \eqref{eq:adef}. By the quasimodularity relation \eqref{eq:quasimodular},
\begin{equation} \label{eq:10overt}
t^{10} \varphi(i/t) = t^2 \varphi(it) - i t \varphi_1(it) - \varphi_2(it).
\end{equation}
Thanks to the $q$-expansions with $q = e^{- 2 \pi t}$, we have
\begin{equation} \label{eq:phip}
t^{10} \varphi(i/t) = p(t)
+ O\big(t^2 e^{-2\pi t}\big)
\end{equation}
as $t \to \infty$, where
\[
p(t) =
-\frac{864}{\pi^2} e^{4\pi t}
+ \frac{725760}{\pi} t e^{2\pi t}
-\frac{2218752}{\pi^2} e^{2\pi t}
+ \frac{113218560}{\pi} t
- \frac{223140096}{\pi^2}.
\]
Let
\begin{align*}
\widetilde p(r) &= \int_0^\infty p(t) e^{-\pi r^2 t} \, dt\\
& = -\frac{864}{\pi^3(r^2-4)}+\frac{725760}{\pi^3(r^2-2)^2}-\frac{2218752}{\pi^3(r^2-2)}+
\frac{113218560}{\pi^3r^4}-\frac{223140096}{\pi^3r^2}.
\end{align*}
Then
\begin{equation} \label{eq:avalues}
a(r) = 4 i \sin\mathopen{}\big(\pi r^2/2\big)^2\mathclose{} \left( \widetilde p(r) + \int_0^\infty \big(\varphi(i/t) t^{10} - p(t)\big) e^{-\pi r^2 t} \, dt\right)
\end{equation}
for $r>2$. The integral
\[
\int_0^\infty \big(\varphi(i/t) t^{10} - p(t)\big) e^{-\pi r^2 t} \, dt
\]
is analytic on a neighborhood of $[0,\infty)$, and hence \eqref{eq:avalues}
holds for all $r$.  Note in particular that $a$ maps $\R$ to $i\R$ by
\eqref{eq:avalues} (or by \eqref{eq:adef} via analytic continuation).

\eqfullref{Equation}{eq:avalues} implies that $a(r)$ vanishes to second order
whenever $r = \sqrt{2k}$ with $k > 2$, because $\widetilde p$ has no poles at
these points. Furthermore, this formula implies that
\begin{flalign*}
&& a(0) &= \frac{113218560i}{\pi}, &&\\
&& a\big(\sqrt{2}\big) &= \frac{725760i}{\pi}, &&\\
&& a'\big(\sqrt{2}\big) &= \frac{-4437504\sqrt{2}i}{\pi}, &&\\
&& a(2) &= 0, &&\\
&\text{and}\hidewidth\\
&& a'(2) &= \frac{-3456i}{\pi}. &&
\end{flalign*}
The Taylor series expansion is
\[
a(r) = \frac{113218560i}{\pi} - \frac{223140096i}{\pi} r^2 + O\big(r^4\big)
\]
around $r=0$.

If we rescale $a$ so that its value at $0$ is $1$, then the value at
$\sqrt{2}$ becomes $1/156$ and the derivative there becomes
$-107\sqrt{2}/2730$, and the derivative at $2$ becomes $-1/32760$.  The
Taylor series becomes
\[
1 -\frac{3587}{1820} r^2 + O\big(r^4\big).
\]
However, the higher order terms in this Taylor series do not appear to be
rational, because they involve contributions from the integral in
\eqref{eq:avalues}.

We arrived at the definition \eqref{eq:phidef} of $\varphi$ via the Ansatz
that $\Delta^2 \varphi$ should\break be a holomorphic quasimodular form of
weight $16$ and depth $2$ for $\mathrm{SL}_2(\Z)$.\break  The space of such
forms is five-dimensional, spanned by $E_4^4$, $E_6^2E_4$, $E_6E_4^2E_2$,
$E_4^3E_2^2$, and $E_6^2E_2^2$. Within this space, one can solve for
$\varphi$ in several ways.  We initially found it by matching the numerical
conjectures from \cite{CM}, but in retrospect one can instead impose
constraints on its behavior at $0$ and $i \infty$, namely,
\eqref{eq:phiovertinfty} and \eqref{eq:phiovertzero}.  This information is
enough to determine $\varphi$ and hence the eigenfunction $a$, up to a
constant factor.

\section{The $-1$ eigenfunction}
\label{sec:b}

Next we construct a radial eigenfunction of the Fourier transform in
$\R^{24}$ with eigenvalue $-1$.  We will use the notation
\begin{flalign*}
&& \Theta_{00}(z) & = \sum_{n \in \Z} e^{\pi i n^2 z}, &&\\
&& \Theta_{01}(z) & = \sum_{n \in \Z} (-1)^n e^{\pi i n^2 z}, &&\\
&\text{and}\hidewidth\\
&& \Theta_{10}(z) & = \sum_{n \in \Z} e^{\pi i (n+1/2)^2 z} &&
\end{flalign*}
for theta functions from \cite{V}.  These functions satisfy the
transformation laws
\begin{alignat*}{5}
\Theta_{00}^4 |_{2} S &= -\Theta_{00}^4, &\qquad \Theta_{01}^4 |_{2} S &= -\Theta_{10}^4, &\qquad \Theta_{10}^4 |_{2} S &= -\Theta_{01}^4,\\
\Theta_{00}^4 |_{2} T &= \Theta_{01}^4, &\qquad \Theta_{01}^4 |_{2} T &= \Theta_{00}^4, &\qquad \Theta_{10}^4 |_{2} T &= -\Theta_{10}^4,
\end{alignat*}
where $S = \begin{pmatrix}0 & -1\\ 1 & 0
\end{pmatrix}$, $T =
\begin{pmatrix}1 & 1\\ 0 & 1\end{pmatrix}$, and
\[
\big(g |_k M\big)(z) = (cz+d)^{-k} g\mathopen{}\left(\frac{az+b}{cz+d}\right)\mathclose{}
\]
for a function $g$ on the upper half plane and a matrix $M = \begin{pmatrix}a & b\\
c & d
\end{pmatrix} \in \mathrm{SL}_2(\R)$.

Let
\begin{equation} \label{eq:psiIdef}
\begin{split}
\psi_I &= \frac{7 \Theta_{01}^{20} \Theta_{10}^8 + 7 \Theta_{01}^{24} \Theta_{10}^4 + 2 \Theta_{01}^{28}}{\Delta^2}\\
&= 2q^{-2} - 464q^{-1} + 172128 - 3670016q^{1/2} + 47238464q\\
& \quad \phantom{}  - 459276288q^{3/2}  + O\big(q^2\big),
\end{split}
\end{equation}
which is a weakly holomorphic modular form of weight $-10$ for $\Gamma(2)$,
and let
\begin{equation} \label{eq:psiSdef}
\begin{split}
\psi_S &= \psi_I |_{-10} S = -\frac{7 \Theta_{10}^{20} \Theta_{01}^8 + 7 \Theta_{10}^{24} \Theta_{01}^4 + 2 \Theta_{10}^{28}}{\Delta^2}\\
& = -7340032q^{1/2} - 918552576q^{3/2} + O\big(q^{5/2}\big)
\end{split}
\end{equation}
and
\begin{align*}
\psi_T &= \psi_I |_{-10} T = \frac{7 \Theta_{00}^{20} \Theta_{10}^8 - 7 \Theta_{00}^{24} \Theta_{10}^4 + 2 \Theta_{00}^{28}}{\Delta^2}\\
& = 2q^{-2} - 464q^{-1} + 172128 + 3670016q^{1/2}\\
& \quad \phantom{} + 47238464q + 459276288q^{3/2} + O\big(q^2\big).
\end{align*}
Note that $\psi_S + \psi_T = \psi_I$, which follows from the Jacobi identity
${\Theta_{01}^4 + \Theta_{10}^4 = \Theta_{00}^4}$.

Using these $q$-expansions, we find that
\begin{equation} \label{eq:psitinfinity}
\psi_I(it) = O\big(e^{4\pi t}\big)
\end{equation}
as $t \to \infty$, and
\begin{equation} \label{eq:psitzero}
\psi_I(it) = O\big(t^{10} e^{-\pi/t}\big)
\end{equation}
as $t \to 0$.  Let
\[
b(r) = -4 \sin\mathopen{}\big(\pi r^2/2\big)^2\mathclose{} \int_0^{i \infty} \psi_I(z) e^{\pi i r^2 z} \, dz
\]
for $r>2$, where the integral converges by the above bounds.

\begin{lemma} \label{lemma:minusone}
The function $r \mapsto b(r)$ analytically continues to a holomorphic
function on a neighborhood of $\R$.  Its restriction to $\R$ is a Schwartz
function and a radial eigenfunction of the Fourier transform in $\R^{24}$
with eigenvalue $-1$.
\end{lemma}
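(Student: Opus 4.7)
My plan is to follow the proof of \fullref{Lemma}{lemma:plusone}, with $\psi_I(z)$ now playing the role that $z^{10}\varphi(-1/z)$ did there. I substitute the identity $-4\sin^2(\pi r^2/2) = e^{\pi i r^2} - 2 + e^{-\pi i r^2}$ into the defining integral of $b(r)$ and absorb $e^{\pm\pi i r^2}$ into the Gaussian by shifting the contour by $\pm 1$. Since $\psi_I$ is modular for $\Gamma(2)$ and hence $2$-periodic, the integrands on the shifted contours are $\psi_I(z \mp 1) e^{\pi i r^2 z} = \psi_T(z) e^{\pi i r^2 z}$. Splitting each of the three contours at $z = i$ and deforming the upper portions onto the vertical ray from $i$ to $i\infty$ (legitimate for $r > 2$ by \eqref{eq:psitinfinity}), the Jacobi identity $\psi_I = \psi_S + \psi_T$ collapses the three upper pieces into $-2\int_i^{i\infty}\psi_S(z) e^{\pi i r^2 z}\,dz$, producing the representation
\[
b(r) = \int_{-1}^{i}\!\psi_T(z) e^{\pi i r^2 z}\,dz + \int_{1}^{i}\!\psi_T(z) e^{\pi i r^2 z}\,dz - 2\int_{0}^{i}\!\psi_I(z) e^{\pi i r^2 z}\,dz - 2\int_{i}^{i\infty}\!\psi_S(z) e^{\pi i r^2 z}\,dz.
\]

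The three bounded integrals above are entire in $r$; the fourth is holomorphic on a complex neighborhood of $\R$ by the decay $\psi_S(it) = O(e^{-\pi t})$ from \eqref{eq:psiSdef}. At the finite endpoints $0, \pm 1$ the integrands vanish to all orders by \eqref{eq:psitzero} together with $\Gamma(2)$-invariance, so no boundary issue arises. Thus $b$ extends holomorphically to a neighborhood of $\R$, and the Schwartz property on $\R$ follows exactly as in \fullref{Lemma}{lemma:plusone} from the exponential factor $e^{\pi i r^2 z}$ along contours lying in the open upper half plane.

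For the Fourier-eigenvalue claim, I apply the radial Fourier transform under the integrals, replacing $e^{\pi i r^2 z}$ with $z^{-12} e^{-\pi i r^2/z}$, and substitute $w = -1/z$ to return the contours to the upper half plane. Each integrand $\psi_\bullet(z)$ becomes $w^{10}\psi_\bullet(-1/w) = (\psi_\bullet|_{-10}S)(w)$. The stated slash action gives $\psi_I|_{-10}S = \psi_S$, and $\psi_S|_{-10}S = \psi_I$ follows from $S^2 = -I$; the missing relation $\psi_T|_{-10}S = -\psi_T$ I extract by applying $|_{-10}S$ to $\psi_I = \psi_S + \psi_T$. Tracking how the endpoints $\{0, \pm 1, i, i\infty\}$ get permuted under $w = -1/z$, with appropriate orientation reversals on the segments connecting $0$ and $i\infty$, reproduces the same four integrals as in $b(r)$ but with all signs flipped, yielding $\widehat b = -b$. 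The main obstacle will be precisely this last sign- and orientation-bookkeeping after $w = -1/z$; the modular-form side is routine once $\psi_T|_{-10}S = -\psi_T$ is in hand.
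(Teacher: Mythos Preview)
Your proposal is correct and follows essentially the same approach as the paper: the same contour-splitting at $i$ using $\psi_I(z\pm 1)=\psi_T(z)$ and $\psi_T-\psi_I=-\psi_S$ to obtain the identical four-integral representation, and the same substitution $w=-1/z$ together with $\psi_I|_{-10}S=\psi_S$, $\psi_S|_{-10}S=\psi_I$, $\psi_T|_{-10}S=-\psi_T$ (the last deduced from the Jacobi identity) to verify $\widehat b=-b$.
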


\begin{proof}
As in the proof of Proposition~6 from \cite{V}, we substitute
\[
-4\sin\mathopen{}\big(\pi r^2/2\big)^2\mathclose{} = e^{-\pi i r^2} - 2 +
e^{\pi i r^2}
\]
and shift contours to show that for $r>2$,
\begin{align*}
b(r) & = \int_{-1}^{i\infty-1} \psi_I(z+1) e^{\pi i r^2 z} \, dz - 2 \int_0^{i \infty} \psi_I(z) e^{\pi i r^2 z} \, dz\\
& \quad \phantom{} + \int_1^{i \infty+1} \psi_I(z-1) e^{\pi i r^2 z} \, dz\\
&=
\int_{-1}^i \psi_T(z) e^{\pi i r^2 z} \, dz + \int_{1}^i \psi_T(z) e^{\pi i r^2 z} \, dz - 2\int_{0}^i \psi_I(z) e^{\pi i r^2 z} \, dz\\
& \quad \phantom{} + 2 \int_i^{i\infty} \big(\psi_T(z)-\psi_I(z)\big) e^{\pi i r^2 z} \, dz.
\end{align*}
Here, we have used $\psi_I(z+1) = \psi_I(z-1) = \psi_T(z)$, and we have
shifted the endpoints from $i \infty \pm 1$ to $i \infty$ (which is
justified because the inequality $r>2$ ensures that the integrand decays
exponentially).  Finally, applying $\psi_T - \psi_I = -\psi_S$ yields
\begin{align*}
b(r) & =
\int_{-1}^i \psi_T(z) e^{\pi i r^2 z} \, dz + \int_{1}^i \psi_T(z) e^{\pi i r^2 z} \, dz - 2\int_{0}^i \psi_I(z) e^{\pi i r^2 z} \, dz\\
& \quad \phantom{} - 2 \int_i^{i\infty} \psi_S(z) e^{\pi i r^2 z} \, dz,
\end{align*}
which yields the analytic continuation to $r \le 2$, and essentially the
same estimates prove that it is a Schwartz function.

To show that the $24$-dimensional radial Fourier transform $\widehat{b}$
satisfies ${\widehat{b}=-b}$, we follow the approach of Proposition~5 from
\cite{V}.  As in the proof of \fullref{Lemma}{lemma:plusone},
\begin{align*}
\widehat{b}(r) & =
\int_{-1}^i \psi_T(z) z^{-12} e^{\pi i r^2 (-1/z)} \, dz + \int_{1}^i \psi_T(z) z^{-12} e^{\pi i r^2 (-1/z)} \, dz\\
& \quad \phantom{} - 2\int_{0}^i \psi_I(z) z^{-12} e^{\pi i r^2 (-1/z)} \, dz - 2 \int_i^{i\infty} \psi_S(z) z^{-12} e^{\pi i r^2 (-1/z)} \, dz,
\end{align*}
and the change of variables $w=-1/z$ yields
\begin{align*}
\widehat{b}(r) & =
\int_{1}^i \psi_T\mathopen{}\left(-\frac{1}{w}\right)\mathclose{} w^{10} e^{\pi i r^2 w} \, dw + \int_{-1}^i \psi_T\mathopen{}\left(-\frac{1}{w}\right)\mathclose{} w^{10} e^{\pi i r^2 w} \, dw\\
& \quad \phantom{} + 2\int_{i}^{i \infty} \psi_I\mathopen{}\left(-\frac{1}{w}\right)\mathclose{} w^{10} e^{\pi i r^2 w} \, dw + 2 \int_0^{i} \psi_S\mathopen{}\left(-\frac{1}{w}\right)\mathclose{} w^{10} e^{\pi i r^2 w} \, dw.
\end{align*}
Finally, $\widehat{b}=-b$ follows from the equations
\[
\psi_I|_{-10} S = \psi_S, \quad \psi_S |_{-10} S = \psi_I, \quad \text{and}\quad \psi_T|_{-10} S
= - \psi_T,
\]
where the first two equations amount to the definition of $\psi_S$ and the
third follows from $\psi_S + \psi_T = \psi_I$.
\end{proof}

For $r>2$, we have
\begin{equation} \label{eq:b2}
b(r) = -4i \sin\mathopen{}\big(\pi r^2/2\big)^2\mathclose{} \int_0^\infty \psi_I(it) e^{-\pi r^2 t}\, dt.
\end{equation}
From the $q$-expansion, we have
\[
\psi_I(it) = 2e^{4\pi t} - 464e^{2\pi t} + 172128 + O\big(e^{-\pi t}\big)
\]
as $t \to \infty$, and
\[
\int_0^\infty \left(2e^{4\pi t} - 464e^{2\pi t} + 172128\right) e^{-\pi r^2 t}\, dt =
\frac{2}{\pi(r^2-4)}-\frac{464}{\pi(r^2-2)}+\frac{172128}{\pi r^2}.
\]
Thus, for all $r \ge 0$,
\begin{align*}
b(r) &= -4 i \sin\mathopen{}\big(\pi r^2/2\big)^2\mathclose{} \bigg(\frac{2}{\pi(r^2-4)}-\frac{464}{\pi(r^2-2)}+\frac{172128}{\pi r^2}\\
&\quad \phantom{} + \int_0^\infty \left(\psi_I(it) - 2e^{4\pi t} + 464e^{2\pi t} - 172128\right) e^{-\pi r^2 t}\, dt \bigg),
\end{align*}
by analytic continuation.

This formula implies that $b(r)$ vanishes to second order whenever $r =
\sqrt{2k}$ with $k > 2$. Furthermore, it implies that
\begin{flalign*}
&& b(0) &= b\big(\sqrt{2}\big) = b(2) = 0, &&\\
&& b'\big(\sqrt{2}\big) &= 928 i \pi \sqrt{2}, &&\\
&\text{and}\hidewidth\\
&& b'(2) &= -8 \pi i. &&
\end{flalign*}
The Taylor series expansion is
\[
b(r) = -172128\pi i r^2 + O\big(r^4\big)
\]
around $r=0$, and $b$ maps $\R$ to $i\R$.

To obtain the definition \eqref{eq:psiIdef} of $\psi_I$, we began with the
Ansatz that $\Delta^2\psi_I$ should be a holomorphic modular form of weight
$14$ for $\Gamma(2)$.  The space of such forms is eight-dimensional, spanned
by $\Theta_{01}^{4i}\Theta_{10}^{28-4i}$ with $i = 0, 1, \dots, 7$, and the
subspace of forms satisfying the linear constraint $\psi_S + \psi_T = \psi_I$
is three-dimensional.  As in the case of $\varphi$ in
\fullref{Section}{sec:a}, one can solve for $\psi_I$ in several ways. In
particular, within the subspace satisfying $\psi_S + \psi_T = \psi_I$, the
asymptotic behavior specified by \eqref{eq:psitinfinity} and
\eqref{eq:psitzero} determines $\psi_I$ up to a constant factor.


\vspace{-0.2cm}

\section{Proof of \fullref{Theorem}{thm:main}}
\label{sec:proof}

\vspace{-0.15cm}

We can now construct the optimal auxiliary function for use in
\fullref{Theorem}{thm:LP}. Let
\[
f(r) = -\frac{\pi i}{113218560} a(r) - \frac{i}{262080\pi}b(r).
\]
Then $f(0) = \widehat{f}(0) = 1$, and the quadratic Taylor coefficients of
$f$ and $\widehat{f}$ are $-14347/5460$ and $-205/156$, respectively, as
conjectured in \cite{CM}. The functions $f$ and $\widehat{f}$ have roots at
all of the vector lengths in the Leech lattice, i.e., $\sqrt{2k}$ for
$k=2,3,\dots$\,. These roots are double roots except for the root of $f$ at
$2$, where $f'(2) = -1/16380$ (in accordance with Lemma~5.1 in \cite{CM}).
Furthermore, $f$ has the value $1/156$ and derivative $-146\sqrt{2}/4095$ at
$\sqrt{2}$, while $\widehat{f}$ has the value $1/156$ and derivative
$-5\sqrt{2}/117$ there.

We must still check that $f$ satisfies the hypotheses of
\fullref{Theorem}{thm:LP}.  We will do so using the approach of \cite{V},
with one extra complication at the end.

For $r>2$, \eqfullref{equations}{eq:a2} and \eqref{eq:b2} imply that
\[
f(r) = \sin\mathopen{}\big(\pi r^2/2\big)^2\mathclose{} \int_0^\infty A(t) e^{-\pi r^2 t} \, dt,
\]
where
\begin{align*}
A(t) &= \frac{\pi}{28304640} t^{10} \varphi(i/t) - \frac{1}{65520\pi} \psi_I(it)\\
&= \frac{\pi}{28304640} t^{10} \varphi(i/t) + \frac{1}{65520\pi} t^{10}\psi_S(i/t).
\end{align*}
To show that $f(r) \le 0$ for $r\ge 2$ with equality only at $r$ of the form
$\sqrt{2k}$ with $k = 2, 3, \dots$, it suffices to show that $A(t) \le 0$.
Specifically, $A$ cannot be identically zero since then $f$ would vanish as
well; given that $A$ is continuous, nonpositive everywhere, and negative
somewhere, it follows that
\[
\int_0^\infty A(t) e^{-\pi r^2 t} \, dt < 0
\]
for all $r$ for which it converges (i.e., $r>2$).

Because
\[
A(t) = \frac{\pi}{28304640} t^{10} \left(\varphi(i/t) + \frac{432}{\pi^2} \psi_S(i/t)\right),
\]
showing that $A(t) \le 0$ amounts to showing that
\begin{equation} \label{eq:Aineq}
\varphi(it) + \frac{432}{\pi^2} \psi_S(it) \le 0.
\end{equation}
The formula
\[
\psi_S = -\frac{7 \Theta_{10}^{20} \Theta_{01}^8 + 7
\Theta_{10}^{24} \Theta_{01}^4 + 2 \Theta_{10}^{28}}{\Delta^2}
\]
immediately implies that $\psi_S(it) \le 0$, and so to prove \eqref{eq:Aineq}
it suffices to prove that $\varphi(it) \le 0$.  We prove this inequality in
\fullref{Lemma}{lemma:phinonpos} by bounding the truncation error in the
$q$-series and examining the leading terms (splitting into the cases $t \ge
1$ and $t \le 1$). It follows that $f(r) \le 0$ for $r \ge 2$, as desired.

For $r>2$, the analogous formula for $\widehat{f}$ is
\begin{equation} \label{eq:fhatB}
\widehat{f}(r) = \sin\mathopen{}\big(\pi r^2/2\big)^2\mathclose{} \int_0^\infty B(t) e^{-\pi r^2 t} \, dt,
\end{equation}
where
\begin{equation} \label{eq:Bdef}
\begin{split}
B(t) &= \frac{\pi}{28304640} t^{10} \varphi(i/t) + \frac{1}{65520\pi} \psi_I(it)\\
&= \frac{\pi}{28304640} t^{10} \varphi(i/t) - \frac{1}{65520\pi} t^{10}\psi_S(i/t).
\end{split}
\end{equation}
To show that $\widehat{f}(r) \ge 0$ for $r>2$, it suffices to show that $B(t)
\ge 0$ for all $t \ge 0$, i.e.,
\begin{equation} \label{eq:Bineq}
\varphi(it) - \frac{432}{\pi^2} \psi_S(it) \ge 0,
\end{equation}
for the same reason as we saw above with $A(t)$. This inequality is
\fullref{Lemma}{lemma:ineq2}.

The formula \eqref{eq:fhatB} in fact holds for $r > \sqrt{2}$, not just
$r>2$.  To see why, we must examine the asymptotics of $B(t)$.  There is no
problem with the integral in \eqref{eq:fhatB} as $t \to 0$, because $B(t)$
vanishes in this limit by \eqref{eq:phiovertzero} and \eqref{eq:psitzero}.
However, the exponential growth of $B(t)$ as $t \to \infty$ causes divergence
when $r$ is too small for $e^{-\pi r^2 t}$ to counteract this growth.  To
estimate the growth rate, note that by \eqref{eq:phip} and
\eqref{eq:psiIdef}, the $e^{4\pi t}$ terms cancel in the asymptotic expansion
of $B(t)$ as $t \to \infty$, which means that $B(t) = O\big(t e^{2\pi
t}\big)$. Thus, the formula \eqref{eq:fhatB} for $\widehat{f}(r)$ converges
when $r > \sqrt{2}$, and it must equal $\widehat{f}(r)$ by analytic
continuation. Note that it cannot hold for the whole interval $(0,\infty)$,
because that would force $\widehat{f}$ to vanish at $\sqrt{2}$, which does
not happen.

Thus, \eqref{eq:fhatB} and the inequality $B(t) \ge 0$ in fact prove that
$\widehat{f} \ge 0$ for all $r \ge \sqrt{2}$. When $0 < r < \sqrt{2}$, this
inequality no longer implies that $\widehat{f}(r) \ge 0$, which is a
complication that does not occur in \cite{V}. Instead, we must analyze $B(t)$
more carefully. As $t \to \infty$, \eqfullref{equations}{eq:phip} and
\eqref{eq:psiIdef} show that
\[
B(t) = \frac{1}{39}te^{2\pi t}-\frac{10}{117\pi}e^{2\pi t} + O(t).
\]
We will ameliorate this behavior by subtracting these terms over the
interval $[1,\infty)$. They contribute
\[
\int_1^\infty \left(\frac{1}{39}te^{2\pi t}-\frac{10}{117\pi}e^{2\pi t}\right) e^{-\pi r^2 t} \, dt =
\frac{(10-3\pi)(2-r^2) + 3}{117\pi^2(r^2-2)^2}e^{-\pi(r^2-2)},
\]
which is nonnegative for $0 < r < \sqrt{2}$, and the remaining terms
\[
\int_0^1 B(t) e^{-\pi r^2 t} \, dt + \int_1^\infty \left(B(t) - \frac{1}{39}te^{2\pi t}+\frac{10}{117\pi}e^{2\pi t} \right) e^{-\pi r^2 t} \, dt
\]
converge for all $r>0$.  The integrand $B(t)$ in the first integral is
nonnegative, and thus to prove that $\widehat{f}(r) \ge 0$ for $0 < r <
\sqrt{2}$ it suffices to prove that
\begin{equation} \label{eq:Basymp}
B(t) \ge \frac{1}{39}te^{2\pi t}-\frac{10}{117\pi}e^{2\pi t}
\end{equation}
for $t \ge 1$, which is \fullref{Lemma}{lemma:Bleadingterms}.

Combining the results of this section shows that $f$ satisfies the hypotheses
of \fullref{Theorem}{thm:LP}, and thus that the Leech lattice is an optimal
sphere packing in $\R^{24}$.  Furthermore, $f$ has no roots $r>2$ other than
$r=\sqrt{2k}$ with $k=2,3,\dots$, and as in Section~8 of \cite{CE} this
condition implies that the Leech lattice is the unique densest periodic
packing in $\R^{24}$.  This completes the proof of
\fullref{Theorem}{thm:main}.

\appendix
\section{Inequalities for quasimodular forms}
\label{app:ineq}

The proof in \fullref{Section}{sec:proof} requires checking certain
inequalities for quasimod\-ular forms on the imaginary axis. Fortunately,
these inequalities are not too delicate, because equality is never attained.
The behavior at infinity is easily analyzed, which reduces the proof to
verifying the inequalities on a compact interval, and that can be done by a
finite calculation.

Thus, these inequalities are clearly provable if true. The proof of the
analogous inequalities in \cite{V} used interval arithmetic, but in this
appendix we take a different approach, based on applying Sturm's theorem to
truncated $q$-series. We have documented the calculations carefully, to
facilitate checking the proof. Computer code for verifying our calculations
is contained in the ancillary file \texttt{appendix.txt}.  The code can be
obtained at \doi{10.4007/annals.2017.185.3.8}, as well as at the arXiv.org
e-print archive, where this paper is available as \arxiv{1603.06518}.  Our
code is for the free computer algebra system PARI/GP (see \cite{PARI}), but
the calculations are simple enough that they are not difficult to check in
any computer algebra system.

To prove each inequality, we approximate the modular form using $q$-series
and prove error bounds for truncating the series, which we then incorporate
by adding them to an appropriate term of the truncated series. The result is
nearly a polynomial in $q$, with the possible exceptions being factors of $t$
(where $z=it$), and we bound those factors so as to reduce to the case of a
polynomial in $q$. Furthermore, we bound any factors of $\pi$ so that the
coefficients become rational. Finally, we use Sturm's theorem with exact
rational arithmetic to verify that the truncated series never changes sign.

To prove the error bounds, we need to control the growth of the coefficients.
We first multiply by $\Delta^2$ to clear the denominators that appear in
\eqref{eq:phidef}, \eqref{eq:psiIdef}, and \eqref{eq:psiSdef}. The advantage
of doing so is that the coefficients of the numerator grow only polynomially.
To estimate the growth rate, we bound the coefficient of $q^n$ in $E_2$ by
$24(n+1)^2$ in absolute value, in $E_4$ by $240(n+1)^4$, and\break in $E_6$
by $504(n+1)^6$.  It is also not difficult to show that the coefficient of
$q^{n/2}$ in $\Theta_{00}^4$, $\Theta_{01}^4$ or $\Theta_{10}^4$ is at most
$24(n+1)^2$ in absolute value.\footnote{Both $\Theta_{00}^4$ and
$\Theta_{10}^4$ have nonnegative coefficients, and their sum is the theta
series of the $D_4$ root lattice in the variable $q^{1/2}$, from which one
can bound their coefficients. Furthermore, $\Theta_{01}^4 = \Theta_{00}^4 -
\Theta_{10}^4$.} Multiplying series is straightforward: if $|a_n| \le
(n+1)^\ell$ and $|b_n| \le (n+1)^m$, then the coefficients of\break
$\big(\sum_n a_n q^n\big)\big(\sum_n b_n q^n\big)$ are bounded by
$(n+1)^{\ell+m+1}$. When we add two $q$-series with coefficients bounded by
different powers of $n+1$, we typically produce an\break upper bound by
rounding up the lower power for simplicity. Using these tech\-niques leads to
explicit polynomial bounds for the coefficients of $\varphi\Delta^2$, $\psi_I
\Delta^2$, and $\psi_S \Delta^2$ by using their definitions in terms of
Eisenstein series and theta functions. These bounds are inefficient, but they
suffice for our purposes.

When $t \ge 1$, $q = e^{-2\pi t}$ is small enough that these coefficient
bounds yield a reasonable error term.  When $t \le 1$, we replace it with
$1/t$ (via $z \mapsto -1/z$) and compute the corresponding $q$-expansion.

\begin{lemma} \label{lemma:phinonpos}
For $t > 0$,
\[
\varphi(it) < 0.
\]
\end{lemma}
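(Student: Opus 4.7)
The plan is as follows. Since $\Delta^2(it) > 0$ for all $t > 0$ and $\varphi = N/\Delta^2$ with $N = 25 E_4^4 - 49E_6^2 E_4 + 48 E_6E_4^2 E_2 + (-49 E_4^3 + 25 E_6^2)E_2^2$ the numerator of \eqref{eq:phidef}, it is equivalent to prove $N(it) < 0$ for $t > 0$. The advantage is that, unlike $\varphi$ itself, the holomorphic form $N$ has polynomially growing $q$-coefficients, so truncation errors are manageable. As described above, I split the positive axis at $t = 1$ and handle each half using the $q$-expansion at whichever cusp is closer to $it$.

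For $t \ge 1$, I set $q = e^{-2\pi t} \in (0, e^{-2\pi}]$ and work directly with $N(it) = -3657830400\,q^3 + O(q^4)$. The coefficient bounds $|[q^n]E_2| \le 24(n+1)^2$, $|[q^n]E_4| \le 240(n+1)^4$, $|[q^n]E_6| \le 504(n+1)^6$ recorded above, combined with the product rule that series coefficients bounded by $(n+1)^\ell$ and $(n+1)^m$ yield a product bounded by $(n+1)^{\ell+m+1}$, give an explicit polynomial bound on the coefficients of $N$. Truncating after a few terms, bounding the tail by a geometric series using $q \le e^{-2\pi}$, and absorbing the tail bound into the last retained coefficient yields a polynomial $P$ with rational coefficients such that $N(it) \le P(q)$ on this interval. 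A Sturm-theorem computation then verifies $P(q) < 0$ on $(0, e^{-2\pi}]$.

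For $0 < t \le 1$, the $q$-expansion at infinity is no longer effective, so I flip to the other cusp via the quasimodularity relation \eqref{eq:10overt}. Substituting $t \mapsto 1/t$ there gives
\[
\varphi(it) = t^8 \varphi(i/t) - i t^9 \varphi_1(i/t) - t^{10}\varphi_2(i/t),
\]
whose right-hand side is governed by the small parameter $\tilde q = e^{-2\pi/t} \in (0, e^{-2\pi}]$. The $q$-expansions of $\varphi, \varphi_1, \varphi_2$ displayed after \eqref{eq:quasimodular} show that as $t \to 0^+$ the term $-t^{10}\varphi_2(i/t) \sim -864\, t^{10}\tilde q^{-2}/\pi^2$ is strongly negative and dominant, the term $-i t^9\varphi_1(i/t) \sim (725760/\pi)\, t^9 \tilde q^{-1}$ is positive and subdominant, and $t^8 \varphi(i/t)$ is negligible. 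Truncating each of the three series, bounding the tails with the coefficient estimates, factoring out $t^8$, and replacing the residual factors of $t \in (0,1]$ and of $\pi$ by numerical bounds chosen to preserve the inequality reduces the problem to showing that an explicit Laurent polynomial in $\tilde q$ with rational coefficients is negative on $(0, e^{-2\pi}]$. Again this is a Sturm-theorem check.

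The main obstacle is the $t \le 1$ case: the three contributing expansions have enormous leading coefficients that nearly cancel (in particular the $\tilde q^{-1}$ contributions of the $\varphi_1$- and $\varphi_2$-terms), so one must retain enough terms of each expansion, and bound the $t$- and $\pi$-factors in the correct direction, to ensure that the dominant $\tilde q^{-2}$ contribution is not swamped by truncation error before the Sturm verification. Once a sufficiently sharp truncation is in hand, the rest is a mechanical computation best carried out in a computer algebra system.
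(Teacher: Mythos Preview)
Your proposal follows essentially the same strategy as the paper: clear the denominator by multiplying by $\Delta^2$, use the polynomial coefficient bounds for Eisenstein series to control truncation error, and handle the two ranges $t\ge 1$ and $t\le 1$ separately, the latter via the quasimodularity relation so that the controlling parameter is again small. The paper truncates at $n=50$, absorbs the tail as a $10^{-50}q^6$ correction, and finishes both cases with Sturm's theorem, exactly as you outline.

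One point needs tightening. In your $0<t\le 1$ step you propose to replace the residual factors of $t$ by \emph{numerical} (i.e., constant) bounds and reduce to a Laurent polynomial in $\tilde q$. That cannot work as stated: after factoring out $t^8$ the dominant negative contribution is $-t^2\varphi_2(i/t)\sim -\tfrac{864}{\pi^2}t^2\tilde q^{-2}$, and any constant lower bound on $t^2$ (the only available one is $0$) kills this term and leaves the positive $\tilde q^{-1}$ contribution from $\varphi_1$ uncontrolled. You need a $\tilde q$-dependent lower bound on $t$; the paper obtains it by instead working with $s=1/t\ge 1$ and bounding $1\le s\le 1/(23\,q^{1/2})$ via $se^{-\pi s}\le e^{-\pi}\le 1/23$, which is equivalent to $t\ge 23\,\tilde q^{1/2}$ in your variables. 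With that bound in place the reduction goes through (yielding a polynomial in $\tilde q^{1/2}$ rather than $\tilde q$), and the Sturm check succeeds.
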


\begin{proof}
First, we prove this inequality for $t \ge 1$, in which case $q = e^{-2\pi
t} < 1/535$. The bounds described above show that the coefficient of $q^n$
in $\varphi\Delta^2$ is at most $513200655360(n+1)^{20}$ in absolute value,
and exact computation shows that
\[
\sum_{n = 50}^\infty \frac{513200655360(n+1)^{20}}{535^{n-6}} < 10^{-50}.
\]
Thus, the sum of the absolute values of the terms in $\varphi\Delta^2$ for
$n \ge 50$ amounts to at most $10^{-50} q^6$. Let $\sigma$ be the sum of the
terms with $n<50$. We use Sturm's theorem to check that $\sigma + 10^{-50}
q^6$ never changes sign on $(0,1/535)$ as a polynomial in $q$, and we
observe that it is negative in the limit as $q \to 0$.  This proves that
$\varphi(it) < 0$ for $t \ge 1$.

Using \eqref{eq:10overt}, the bound for $t \le 1$ is equivalent to showing
that
\[
-t^2 \varphi(it) + i t \varphi_1(it) + \varphi_2(it) > 0
\]
for $t \ge 1$. Again we multiply by $\Delta^2$ to control the coefficients.
This case is more complicated, because there are factors of $t$ and $\pi$. We
replace factors of $\pi$ with rational bounds, namely $\lfloor
10^{10}\pi\rfloor/10^{10}$ or $\lceil 10^{10}\pi\rceil/10^{10}$ based on the
sign of the term and whether it is a positive power of $\pi$ (so that we
obtain a lower bound), and we similarly use the bounds $1 \le t \le 1/\big(23
q^{1/2}\big)$; the latter bound follows from $q = e^{-2\pi t}$ and $te^{-\pi
t} \le e^{-\pi} \le 1/23$. To estimate the error bound from truncation, we
use $q^{1/2} < 1/23$; the result is that the error from omitting the $q^n$
terms with $n \ge 50$ is at most $10^{-50} q^6$. These observations reduce
the problem to showing that a polynomial in $q^{1/2}$ with rational
coefficients is positive over the interval $(0,e^{-\pi})$. Using Sturm's
theorem, we check that it holds over the larger interval $(0,1/23)$.
\end{proof}

Note that we could have avoided fractional powers of $q$ in this proof if we
had used a different upper bound for $t$, but fractional powers will be
needed to handle $\psi_S$ and $\psi_I$ in any case.  We will use the bounds
such as $1 \le t \le 1/\big(23 q^{1/2}\big)$ from the preceding proof
systematically in the remaining proofs.

\begin{lemma} \label{lemma:ineq2}
For $t > 0$,
\[
\varphi(it) - \frac{432}{\pi^2} \psi_S(it) > 0.
\]
\end{lemma}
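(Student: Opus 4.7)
The plan is to mirror the two-case structure of \fullref{Lemma}{lemma:phinonpos}, first handling $t \ge 1$ directly from the $q$-expansion at $i\infty$ and then reducing the range $0 < t \le 1$ to a second large-$t$ problem via the substitution $z \mapsto -1/z$. The leading $q$-expansions show that $-\tfrac{432}{\pi^2}\psi_S(it)$ contributes $\tfrac{432\cdot 7340032}{\pi^2}q^{1/2}$ while $\varphi(it)$ only starts at $-3657830400\,q$, so for $t \ge 1$ the claimed inequality is dominated by a positive $q^{1/2}$ term. The hard content lies in the case $0 < t \le 1$, where after the substitution one encounters delicate cancellations at the leading orders.

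For $t \ge 1$, I would multiply by $\Delta^2$ to clear denominators, expand both $\Delta^2\varphi$ and $\Delta^2\psi_S$ as $q^{1/2}$-series, and bound the tail $\sum_{n \ge 50}$ using the polynomial coefficient bounds from the appendix together with $q^{1/2} < e^{-\pi} < 1/23$. Rationalizing the factor of $\pi^{-2}$ by a one-sided approximation (e.g., replacing $432/\pi^2$ with $\lfloor 432 \cdot 10^{10}/\pi^2 \rfloor / 10^{10}$ when this yields a lower bound on the quantity to be shown positive) produces a polynomial in $q^{1/2}$ with rational coefficients, and Sturm's theorem applied on $(0,1/23)$ verifies that it does not change sign, with the sign fixed by the limit $q \to 0$.

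For $0 < t \le 1$, I would set $s = 1/t \ge 1$ and multiply through by $s^{10}$. Using the quasimodularity relation \eqref{eq:quasimodular} together with the identity $s^{10}\psi_S(i/s) = -\psi_I(is)$ (which follows from $\psi_S = \psi_I|_{-10}S$ exactly as in \eqref{eq:Bdef}), the desired inequality becomes
\[
s^2 \varphi(is) - i s\, \varphi_1(is) - \varphi_2(is) + \frac{432}{\pi^2}\psi_I(is) > 0, \qquad s \ge 1.
\]
The $q^{-2}$ contributions of $-\varphi_2(is)$ and $\tfrac{432}{\pi^2}\psi_I(is)$ cancel exactly (both equal $\pm\tfrac{864}{\pi^2} e^{4\pi s}$), so the left-hand side grows at most like $s e^{2\pi s}$. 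Expanding as a $q^{1/2}$-series after multiplying by $\Delta^2$, bounding $s$ via $1 \le s \le 1/(23 q^{1/2})$, and replacing $\pi$ by rational bounds of the form $\lfloor 10^{10}\pi\rfloor/10^{10}$ or $\lceil 10^{10}\pi\rceil/10^{10}$ (chosen case-by-case to obtain a one-sided bound), yields a polynomial in $q^{1/2}$ with rational coefficients whose positivity on $(0,1/23)$ is verified by Sturm's theorem.

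The main obstacle is the second case, where the next-order $q^{-1}$ coefficient is $\tfrac{725760\, s}{\pi} - \tfrac{2419200}{\pi^2}$; at $s = 1$ the two summands are of comparable magnitude, so one must exploit the factor of $s$ (and the large positive constant contribution $\tfrac{113218560\,s}{\pi}$ from $-is\varphi_1$) to guarantee positivity. Care is therefore needed in the direction of each rational approximation to $\pi$ so that the resulting rational polynomial provides a genuine lower bound on the quantity we want to show positive. Once enough terms are retained and the truncation error is absorbed into a suitable coefficient, the remaining Sturm computation is routine.
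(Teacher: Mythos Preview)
Your proposal is correct and follows essentially the same approach as the paper: the same two-case split at $t=1$, the same transformation via $z\mapsto -1/z$ (your inequality $s^2\varphi(is)-is\,\varphi_1(is)-\varphi_2(is)+\tfrac{432}{\pi^2}\psi_I(is)>0$ is exactly the negation of the paper's stated form), the same clearing of denominators by $\Delta^2$, truncation of the $q^{1/2}$-series with a $10^{-50}q^6$ tail bound, rationalization of $\pi$, and verification by Sturm's theorem. Your additional remarks on the $q^{-2}$ cancellation and the $q^{-1}$ coefficient are accurate but not needed for the argument, which the paper treats as a routine Sturm computation once the setup is in place.
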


\begin{proof}
We use exactly the same technique as in the proof of
\fullref{Lemma}{lemma:phinonpos}.
For $t \mkern-3.5mu\ge\mkern-3.5mu 1$, removing the $q^{50}$ and higher terms
in the $q$-series for $\big(\varphi - 432 \psi_S/\pi^2\big)\Delta^2$
introduces an error of at most $10^{-50} q^6$, and Sturm's theorem shows that
the resulting polynomial has no sign changes. Note that $\psi_S$ involves
powers of $q^{1/2}$, and so we must view the truncated series as a polynomial
in $q^{1/2}$ rather than $q$.

For $t \le 1$, we apply \eqfullref{relations}{eq:quasimodular} and
\eqref{eq:psiSdef} to reduce the problem to showing that
\[
-t^2 \varphi(it) + i t \varphi_1(it) + \varphi_2(it) -\frac{432}{\pi^2} \psi_I(it) < 0
\]
for $t \ge 1$.  When we multiply by $\Delta^2$ and remove the $q^{50}$ and
higher terms, the error bound is at most $10^{-50} q^6$, and Sturm's theorem
completes the proof.  As in the previous proof, this case involves handling
factors of $t$ and $\pi$, but they present no difficulties.
\end{proof}

Of course these proofs are by no means optimized.  Instead, they were chosen
to be straightforward and easy to describe.

The final inequality we must verify is \eqref{eq:Basymp}:

\begin{lemma} \label{lemma:Bleadingterms}
For all $t \ge 1$,
\[
B(t) > \frac{1}{39}te^{2\pi t}-\frac{10}{117\pi}e^{2\pi t}.
\]
\end{lemma}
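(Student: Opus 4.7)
The plan is to follow the same template as Lemmas~\ref{lemma:phinonpos} and~\ref{lemma:ineq2}: reduce the claim to the positivity of a polynomial in $q^{1/2}$ (with $q = e^{-2\pi t}$) on the interval $(0, 1/23) \supset (0, e^{-\pi}]$, and verify that polynomial inequality via Sturm's theorem applied to a truncated $q$-series.

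First I would use the quasimodularity relation \eqref{eq:10overt} to rewrite
\[
B(t) = \tfrac{\pi}{28304640}\bigl[t^2 \varphi(it) - it\,\varphi_1(it) - \varphi_2(it)\bigr] + \tfrac{1}{65520\pi} \psi_I(it),
\]
so that each summand has a convergent $q$-expansion for $t \ge 1$. The subtracted quantities $\tfrac{1}{39}t e^{2\pi t}$ and $\tfrac{10}{117\pi} e^{2\pi t}$ are designed precisely to cancel the $tq^{-1}$ and $q^{-1}$ coefficients in this expansion; moreover, the two $q^{-2}$ terms (from $\tfrac{864}{\pi^2}q^{-2}$ in $\varphi_2$ and $2q^{-2}$ in $\psi_I$) already cancel against one another, as exploited in \fullref{Section}{sec:proof}. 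Thus the difference $B(t) - \tfrac{1}{39}t e^{2\pi t} + \tfrac{10}{117\pi} e^{2\pi t}$ has a convergent $q$-expansion in nonnegative half-integer powers of $q$, with coefficients of the form $\alpha + \beta t + \gamma t^2$ for rationals $\alpha, \beta, \gamma$ depending on powers of $1/\pi$ (at most $1/\pi^2$).

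Second, I would multiply the expression by $q^2 \Delta^2$ to clear the denominators of $\varphi$ and $\psi_I$, truncate the resulting holomorphic $q$-series at order $q^{50}$, and bound the tail by $10^{-50} q^6$ using the polynomial coefficient estimates for $\varphi\Delta^2$, $\varphi_1\Delta^2$, $\varphi_2\Delta^2$, and $\psi_I\Delta^2$ described in the appendix. The factors of $t$ and $\pi$ are handled exactly as in the preceding lemmas: replace $t$ by either $1$ or $1/(23 q^{1/2})$ according to the sign of its coefficient (so as to produce a rigorous lower bound on the difference), and replace $\pi$ by $\lfloor 10^{10}\pi\rfloor/10^{10}$ or $\lceil 10^{10}\pi\rceil/10^{10}$ similarly. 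What remains is a single polynomial in $q^{1/2}$ with rational coefficients, and its positivity over $(0, 1/23)$ is verified by Sturm's theorem together with inspection of the $q \to 0$ behavior: the leading constant (in $q$) is $4t - \tfrac{205}{39\pi}$, which for $t \ge 1$ is at least $4 - \tfrac{205}{39\pi} > 0$.

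The main obstacle is purely bookkeeping: one must verify the algebraic identities that make the subtracted $e^{2\pi t}$ terms exactly match the leading behavior of $B(t)$, and one must correctly track sign conventions when replacing $t$ and $\pi$ with rational bounds. Since the inequality is strict with a healthy margin throughout $t \ge 1$ and the relevant cancellations are forced algebraic identities among the leading $q$-series coefficients, the resulting Sturm calculation is routine once the polynomial is assembled.
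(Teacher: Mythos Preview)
Your proposal is correct and follows essentially the same approach as the paper's proof: multiply the difference $B(t) - \bigl(\tfrac{1}{39}te^{2\pi t} - \tfrac{10}{117\pi}e^{2\pi t}\bigr)$ by $\Delta^2$, truncate the resulting $q$-series at order $q^{50}$ with a tail bound of $10^{-50}q^6$, replace the factors of $t$ and $\pi$ by rational bounds exactly as in Lemma~\ref{lemma:phinonpos}, and finish with Sturm's theorem. The paper's write-up is terser and omits the explicit leading-term check, but the mechanics are identical (and your computation of the constant term $4t - 205/(39\pi)$ is correct); note also that multiplying by $\Delta^2$ alone suffices, since $\Delta^2 = q^2 + O(q^3)$ already supplies the factor of $q^2$ you inserted separately.
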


\begin{proof}
As usual, we multiply
\[
B(t) -\left( \frac{1}{39}te^{2\pi t}-\frac{10}{117\pi}e^{2\pi t}\right)
\]
by $\Delta^2$ and compute its $q$-series.  Our usual truncation bounds show
that
re\-moving the $q^{50}$ and higher terms introduces an error bound of at most
$10^{-50} q^6$, and Sturm's theorem again completes the proof.
\end{proof}

\end{document}